\newtheorem{theorem}{Theorem}[section]
\theoremstyle{definition}
\newtheorem{definition}[theorem]{Definition}
\theoremstyle{remark}
\newcommand{\CC}{{\mathbb C}}
\newcommand{\ff}{\varphi}
\newcommand{\alg}{\mathrm{alg}}
\newcommand{\cA}{\mathcal{A}}
\newcommand{\vN}{\text{vN}}
\title{Bi-free Probability Theory and Reflection Positivity}
\author{Roland Speicher}
\address{Saarland University, Department of Mathematics, D-66123 Saarbruecken, Germany}
\email{speicher@math.uni-sb.de}
\thanks{This work is dedicated to KR Parthasarathy whose pursuit of grasping the gist of the matter in the most systematic way served me always as inspiration and shining example.\\
I thank Michael Hartz, Gandalf Lechner, Michael Skeide and Yoshimichi Ueda for their remarks on previous versions of this work.}
\begin{document}

\maketitle

\begin{abstract}
We point out that bi-free product constructions respect reflection positivity.
\end{abstract}
\section{Introduction}

The goal in constructive quantum field theory is to contruct in a mathematical rigorous way non-trivial quantum field theories; either in the form of operator-valued distributions (Wightman functions, or Schwinger functions) or, alternatively via nets of operator-algebras. A crucial ingredient in such theories, which was isolated in the 1970's by Osterwalder and Schrader is the notion of 
\emph{reflection positivity}. However, up to now it is still an open problem to
find a non-trivial theory in 3+1-dimensional space-time which satisfies reflection positivity. 

One possible route to find such mathematical models is to construct more complex ones from simpler building blocks. One of the few universal constructions, which is at the heart of free probability theory, is the free product. However, this does not fit well with the requirements of quantum field theories. The physical axiom of locality requires that fields at space-like separated positions do not interact and hence should commute. This means that physical theories of quantum fields should have a large collection of operators which commute. Free products do not go well with commutativity. Even if we start with
algebras with many commuting operators, taking free products of those will usually
kill all commutativity. This problematic nature of free products with respect to commutativity has the effect that up to now free probability has not really played a vital role in the quest for such physical models. 

However, there have been some attempts in this direction, see \cite{ALR,LTU}, where the idea is to take free product nets, but then also look on the commutants of those objects. (For the relevance of commutants in algebraic quantum field theory, see, e.g., Section 5 of \cite{Gandalf}.) It seems to me that actually a recent extension of free probability, so-called bi-free probability theory \cite{Voi-bi1}, should give a more conceptual frame for dealing with such issues. Bi-free probability
allows, and in particular also preserves, some commutativity. Whereas free probability deals with objects like free group von Neumann algebras, bi-free probability takes those and their commutants at the same time.
It seems that the bi-free construction should allow to built out of simple models more complicated ones by taking bi-free products without destroying all commutativity relations. 

As a special instance of this possibility we want to consider the fate of reflection positivity under such a bi-free product. The setting of reflection positivity fits very well into the bi-free frame, since in both theories we have two
subalgebras which are like mirror images of each other (left and right faces in the bi-free setting, and positive and negative cones in the reflection positivity setting). Since the bi-free product respects the left-right distinction it is quite plausible that it should also respect reflection positivity. We will show that this is indeed the case. It should be remarked that this is nothing deep, but just a simple observation about the structure of the bi-free product -- the main concern is to point out that there is actually such a construction and thus giving some hope that (bi-)free probability tools might find their way into the construction of quantum fields via this route. 

\section{Bi-Freeness and Reflection Positivity}
We will here use the definition of reflection positivity from \cite{Jaffe}; for simplicity we restrict to the bosonic case. Furthermore, we will also ignore all analytical issues and work only on the level of algebras and non-commutative probability spaces. Extensions to operator-algebras and other analytic settings should follow by the usual continuity arguments. All our algebras are unital.
\begin{definition}
Let $\cA$ be an algebra, and $\cA^+,\cA^-$ subalgebras of $\cA$ such that
\begin{itemize}
\item
$\cA=\text{span}\{a_-a_+\mid a_-\in\cA^-, a_+\in\cA^+\} $
\item
$\cA^-$ and $\cA^+$ commute, i.e.
$$a_-a_+=a_+a_- \qquad\text{for all $a_-\in\cA_-$ and $a_+\in\cA_+$}$$
\end{itemize}
Furthermore, assume that $\theta:\cA\to\cA$ is an anti-linear homomorphism which squares to the identity, such that we have $\theta(\cA^+)=\cA^-$.
We say then that a linear functional $\tau:\cA\to\CC$ is \emph{reflection positive} (with respect to $(\cA,\cA^+,\cA^-,\theta)$) if we have 
$$\tau\bigl(\theta(a)a\bigr)\geq 0\qquad\text{for all $a\in\cA^+$}.$$
\end{definition}

The following is the adaptation of Voiculescu's definition of bi-freeness to our special situation. In the bi-free theory one usually has two distinct types of variables (or subalgebras), which are addressed as ``left'' and ``right'' variables. This corresponds clearly to $\cA^-$ and $\cA^+$. However, in the general bi-free setting left and right do not have to commute. If they do, as in our case, the system is called ``bipartite''. In this sense, the following definition is the special case of a bi-free product for a bipartite system.

\begin{definition}
Let, for each $i\in I$, the data $(\cA_i,\cA_i^+,\cA_i^-,\theta_i,\tau_i)$ be given. 
Then we construct their \emph{bi-free product} 
$$(\cA,\cA^+,\cA^-,\theta,\tau)=\ast_{i\in I}(\cA_i,\cA_i^+,\cA_i^-,\theta_i,\tau_i)$$ 
as follows. 
$\cA^+$ is the free product of the $\cA^+_i$; $\cA^-$ is the free product of the $\cA^-_i$; $\cA$ is the tensor product of $\cA^+$ and $\cA^-$; $\theta$ on $\cA^+$ or $\cA^-$ is defined as the free product of the $\theta_i$. The main point is the definition of $\tau$ such that the faces $(\cA^+,\cA^-)$
are becoming bi-free. Since we are in the bi-partite situation (i.e., all left variables commute with all right variables) we only have to specify 
$\tau(\theta(a)b)$ for $a,b\in \cA^+$.
Since $\cA^+$ is the free product of the $\cA^+_i$ we can write any element $b$
in $\cA^+$ as a linear combination of terms of the form
$b_n\cdots b_1$ for $n\geq 0$ and with $b_k\in \cA_{i(k)}^+$ such that
$i(k)\not=i(k+1)$ for all $k=1,\dots,n-1$ and such that $\tau_{i(k)}(b_k)=0$.
(For $n=0$ this shall mean that $b$ is a multiple of 1.)
Let $a_m\cdots a_1$ be another such term with $a_k\in\cA_{j(k)}^+$. Then
according to Lemma 2.2 from \cite{Voi-bi2} we put in such a situation
\begin{equation}\label{eq:def}
\tau(\theta(a_m)\cdots \theta(a_1) b_n\cdots b_1)=\delta_{mn}
\prod_{1\leq k\leq n} \delta_{i(k)j(k)} \tau(\theta(a_k)b_k).
\end{equation}
\end{definition}

Note that the above definition contains also that the $\cA_i^+$ are free with respect to $\tau$ (for $m=0$) and that the $\cA_i^-$ are free with respect to $\tau$ (for $n=0$). The relation between left and right variables is that $\cA_i^+$ is independent from $\cA_j^+$ for $j\not=i$, but the relation between $\cA_i^+$ and $\cA_i^-$ is determined by $\tau_i$. Hence we are free to specify the $\tau_i$ arbitrarily for each $i$ and the above bi-free product construction will then embedd these into a bigger space with producing commutation between left and right, but freeness both on the left and on the right side. The main observation which I want to make here is that actually reflection positivity is preserved under this construction.

\begin{theorem}
Assume that, for each $i\in I$, $\tau_i$ is reflection positive with respect to the data
$(\cA_i,\cA_i^+,\cA_i^-,\theta_i)$. Then the bi-free product state $\tau$ in
$$(\cA,\cA^+,\cA^-,\theta,\tau)=\ast_{i\in I}(\cA_i,\cA_i^+,\cA_i^-,\theta_i,\tau_i)$$ 
is also reflection positive.
\end{theorem}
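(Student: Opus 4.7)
The plan is to analyze the sesquilinear form $(a,b)\mapsto \tau(\theta(a)b)$ on $\cA^+$ by decomposing $\cA^+$ according to its free product structure and showing that the form becomes a block-diagonal direct sum of tensor products of positive forms. First I would write every $a\in\cA^+$ uniquely as
\[
a = \lambda_0\cdot 1 + \sum_{\mathbf{i}} a_{\mathbf{i}},
\]
where the sum runs over finite alternating index sequences $\mathbf{i}=(i(1),\dots,i(n))$ with $i(k)\neq i(k+1)$, and each $a_{\mathbf{i}}$ lies in the subspace $\cV_{\mathbf{i}}\subset\cA^+$ spanned by reduced words $b_n\cdots b_1$ with $b_k$ in the centered part $K_{i(k)}:=\{b\in \cA_{i(k)}^+ : \tau_{i(k)}(b)=0\}$. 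Applying formula \eqref{eq:def}, together with its $m=0$ case (which forces $\tau$ to annihilate every centered reduced word of length $\geq 1$), shows that the form $(a,b)\mapsto \tau(\theta(a)b)$ is block-diagonal with respect to the decomposition $\cA^+ = \CC\cdot 1 \oplus \bigoplus_{\mathbf{i}}\cV_{\mathbf{i}}$, so
\[
\tau(\theta(a)a) = |\lambda_0|^2 + \sum_{\mathbf{i}} \tau(\theta(a_{\mathbf{i}})a_{\mathbf{i}}).
\]

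Next I would identify each block as a tensor product of positive semidefinite forms. Since $\theta$ is an antilinear homomorphism, $\theta(a_n\cdots a_1)=\theta(a_n)\cdots\theta(a_1)$, and combined with \eqref{eq:def} this shows that, under the natural linear isomorphism $K_{i(n)}\otimes\cdots\otimes K_{i(1)}\cong \cV_{\mathbf{i}}$ coming from the free product construction, the form restricted to $\cV_{\mathbf{i}}$ is exactly the tensor product of the sesquilinear forms $(\alpha,\beta)\mapsto \tau_{i(k)}(\theta_{i(k)}(\alpha)\beta)$ restricted from $\cA_{i(k)}^+$ to $K_{i(k)}$. Each such factor form is positive semidefinite by the assumed reflection positivity of $\tau_{i(k)}$, and a standard polarization argument shows it is automatically Hermitian. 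Since a tensor product of positive semidefinite Hermitian sesquilinear forms is again positive semidefinite (proved by quotienting by the kernel and forming a Hilbert space tensor product), we obtain $\tau(\theta(a_{\mathbf{i}})a_{\mathbf{i}})\geq 0$ for every $\mathbf{i}$, whence $\tau(\theta(a)a)\geq 0$.

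The main obstacle is really the bookkeeping in the first step: one has to verify that all cross terms between distinct index sequences, and between the scalar and non-scalar parts, vanish. This reduces to checking that $\theta(a_n\cdots a_1)\in\cA^-$ is itself a reduced word on the minus side with factors $\theta_{i(k)}(a_k)\in\cA_{i(k)}^-$, so that the hypotheses of \eqref{eq:def} apply and pick out exactly the matching-length, matching-index case. Once this block structure is established, positivity of each block is a purely algebraic and essentially classical fact about tensor products of forms, needing no further bi-free input.
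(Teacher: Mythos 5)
Your proposal is correct and follows essentially the same route as the paper: the same decomposition of $\cA^+$ into reduced words with centered letters, the same use of \eqref{eq:def} to kill all cross terms between distinct alternating index sequences (and between the scalar and non-scalar parts), and the same factorization of each diagonal block into the single-algebra forms $\tau_{i}\bigl(\theta_{i}(\cdot)\,\cdot\bigr)$. The only cosmetic difference is that you phrase the final positivity step as positivity of a tensor product of Hermitian positive semidefinite sesquilinear forms, whereas the paper invokes the Schur (entrywise) product of positive semidefinite matrices, following Theorem 6.13 of \cite{NSp}; these are two formulations of the same fact.
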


\begin{proof}
This follows in the same way as the proof that the free product preserves positivity, see Theorem 6.13 in \cite{NSp}.
Namely, we can write any $a\in\cA^+$ as a linear combination
$$a=\sum_{n=0}^N\sum_{i_1,\dots,i_n\in I \atop i_1\not=i_2\not=\cdots \not=i_n} a_{i_1,\dots,i_n}$$
for some $N\geq 0$ and where $a_{i_1,\dots,i_n}$  is of the form $a_n\cdots a_1$ with $a_k\in \cA_{i_k}^+$ and $\tau_{i_k}(a_k)=0$. Then, by \eqref{eq:def}, we have that
$\tau(\theta(a_{i_1,\dots,i_n}) a_{j_1,\dots,j_m})=0$ unless $n=m$ and
$i_1=j_1$, \dots, $i_n=j_n$. So we have to prove positivity only for fixed $i_1,\dots, i_n$. Consider such an element and write it as
$$a_{i_1,\dots,i_n}=\sum_{k=1}^p a_n^{(k)}\cdots a_1^{(k)}$$
where $a_m^{(k)}\in \cA_{i_m}^+$ and $\tau_{i_m}(a_m^{(k)})=0$ for all $m=1,\dots,n$ and $k=1,\dots,p$.
Then we have
\begin{align*}
\tau(\theta(a_{i_1,\dots,i_n})\cdot a_{i_1,\dots,i_n})&=\sum_{k,l=1}^p
\tau\left( \theta(a_n^{(k)}\cdots a_1^{(k)})\cdot a_n^{(l)}\cdots a_1^{(l)}\right)\\
&=\sum_{k,l=1}^p \tau_{i_1}\left(\theta_{i_1}(a_1^{(k)})\cdot a_1^{(l)}\right)\cdots \tau_{i_n}\left(\theta_{i_n}(a_n^{(k)})\cdot a_n^{(l)}\right) 
\end{align*}
Now for each $m=1,\dots,n$ the matrix
$$\left(\tau_{i_m}\left( \theta_{i_m}(a_m^{(k)})\cdot a_m^{(l)}\right)\right)_{k,l=1}^p$$
is positive by our assumption that $\tau_{i_m}$ is reflection positive (note that $\theta_{i_m}$ is anti-linear).  Hence also the pointwise (Schur) product of all those $n$ matrices is positive; which gives then the positivity of 
$\tau(\theta(a_{i_1,\dots,i_n})\cdot a_{i_1,\dots,i_n})$. For details, see \cite{NSp}.
\end{proof}

\end{document}